\def\ve{{\bm{e}}}
\def\vp{{\bm{p}}}
\def\vq{{\bm{q}}}
\def\vu{{\bm{u}}}
\def\vv{{\bm{v}}}
\def\vx{{\bm{x}}}
\def\vy{{\bm{y}}}
\def\vz{{\bm{z}}}
\def\mI{{\bm{I}}}
\def\mA{{\bm{A}}}
\def\mM{{\bm{M}}}
\newcommand{\norm}[1]{\left\| #1 \right\|}
\def\R{{\mathcal{R}}}
\def\H{{\mathcal{H}}}
\def\T{{\operatorname{T}}}
\def\F{{\operatorname{F}}}
\def\CC{{\mathbb{C}}}
\title[On the Hypomonotone Class of Variational Inequalities]{On the Hypomonotone Class of Variational Inequalities}
\definecolor{DarkGreen}{rgb}{0.1,0.5,0.1}
\definecolor{DarkRed}{rgb}{0.5,0.1,0.1}
\definecolor{DarkBlue}{rgb}{0.1,0.1,0.5}
\begin{document}
\maketitle \noindent

\begin{minipage}[t]{0.4\textwidth}
    \centering
    \textbf{Khaled Alomar}\\
    CISPA Helmholtz Center \\for Information Security \\
    \texttt{khaled.alomar@cispa.de}
\end{minipage}
\hfill
\begin{minipage}[t]{0.5\textwidth}
    \centering
    \textbf{Tatjana Chavdarova}\\
     Politecnico di Milano \&\\ University of California, Berkeley
    \texttt{tatjana.chavdarova@berkeley.edu}
\end{minipage}\vspace{5\baselineskip}

\begin{abstract}
This paper studies the behavior of the extragradient algorithm~\citep{korpelevich1976extragradient} when applied to hypomonotone operators, a class of problems that extends beyond the classical monotone setting. To support the understanding of this variational inequality problem class, we focus on a subclass of hypomonotone linear operators, characterizing them based on their eigenvalues and providing concrete examples. While the extragradient method is widely recognized for its efficiency in solving variational inequalities involving monotone and Lipschitz continuous operators, we demonstrate that it does not guarantee convergence in the hypomonotone case. In particular, we construct a counterexample where the extragradient method diverges regardless of the step size. A numerical experiment is presented to support this result.
\end{abstract}

\section{Introduction}\label{sec:intro}

We are interested in numerically solving the following \textit{Variational Inequality}~\citep[VI,][]{stampacchia_formes_1964,facchineiFiniteDimensionalVariationalInequalities2003} problem. Given an \emph{operator} $F\colon \R^n\to \R^n$, the goal is to:
\begin{equation} \label{eq:vi} \tag{VI}
  \text{\emph{find }}\ \vx^\star \qquad\text{s.t.}\qquad \langle \vx -\vx^\star, \F(\vx^\star)\rangle \geq 0, \quad \forall \vx \in \R^n \,.
\end{equation}
Special instances of \eqref{eq:vi} include standard minimization with $F\equiv  \nabla f, f\colon\R^n\to\R$, zero-sum min-max, and general sum multi-player games. 
This problem has gained recent popularity in machine learning, due to several instances that cannot be modeled by minimization only, such as Generative Adversarial Networks~\citep{goodfellow2014generative}, robust versions of classification~\citep[e.g.,][]{szegedy2014intriguing,madry2018towards,NEURIPS2020_02f657d5,causal2020}, actor-critic methods~\citep{konda1999}, and multi-agent reinforcement learning~\citep[e.g.,][]{bertsekas2021rollout,lowe2017multi}.

The extragradient algorithm, introduced by~\citet{korpelevich1976extragradient} in 1976, is a foundational iterative method used to solve variational inequality problems involving monotone operators. A detailed description of the algorithm is provided in \S~\ref{sec:preliminaries}.
Unlike gradient descent, the latest output (last iterate) of the extragradient method converges when applied to monotone and Lipschitz continuous operators; definitions can be found in \S~\ref{sec:preliminaries}. Its widespread use stems from its simplicity and efficiency for this class of problems.

However, in real-world applications, operators may not always exhibit monotonicity or cocoercivity. Instead, they may exhibit hypomonotonicity, a weaker form of the monotonicity condition. 
Hypomonotone operators enable the study of more general and complex scenarios, where finding solutions is more challenging but remains feasible for analysis.

Hypomonotonicity arises in a variety of contexts, including equilibrium problems, optimization with non-convex structures, and certain game-theoretic models. While monotone problems have been extensively studied, the class of hypomonotone problems remains relatively underexplored despite its practical significance; refer to \S~\ref{sec:related_works} for an overview. Understanding hypomonotone problems is crucial for assessing whether methods that guarantee convergence for monotone problems can solve more general problem classes.

\paragraph{Contributions.} Our contributions include:
\begin{itemize}
    \item We present two illustrative examples of problems that are hypomonotone but not monotone. Additionally, we characterize a simple class of problems exhibiting hypomonotonicity, based on their spectral properties. See \S~\ref{sec:hm_examples} for details. 
    \item We demonstrate that the seminal extragradient method fails to converge for general instances of hypomonotone \ref{eq:vi}s. Specifically, we construct a counterexample that is hypomonotone (but not monotone) and show that the extragradient method diverges for this instance regardless of the step size. Refer to \S~\ref{sec:eg_divergence}.
\end{itemize}

\section{Related Works}\label{sec:related_works}

\paragraph{Monotonicity.} 
Saddle point problems and VIs are widely studied, earliest works date to the 1960s~\citep{lions-stampacchia1967,lewy-stampacchia1969}.
For monotone VIs (Def.~\ref{def:monotone}),
~\citet{golowich2020last,golowich2020noregret} established that the lower bound of general $\tilde{p}$-\textit{stationary canonical linear iterative}  ($\tilde{p}$-SCLI) first-order methods~\citep{arjevani2016pscli} is  $\mathcal{O}(\frac{1}{ \tilde{p} \sqrt{K}})$, where $K$ is the iteration count; the result refers to the last iterate.
In her seminal work, \citet{korpelevich1976extragradient} proves the convergence of the iterates of the extragradient method for monotone  \ref{eq:vi}s. 
\citet{golowich2020last} obtained a last iterate rate for extragradient in terms of the gap function, relying on first- and second-order smoothness assumptions.
~\citet{gorbunov2022extra} and~\citet{gorbunov2022}  obtained a rate of $\mathcal{O}(\frac{1}{K})$ for extragradient~\citep{korpelevich1976extragradient} and \emph{optimistic gradient descent}~\citep{popov1980}, respectively---in terms of
reducing the squared norm of the operator, relying on the first-order operator smoothness.
~\citet{golowich2020last} and~\citet{chavdarova2023hrdes} provided the best iterate rate for optimistic gradient descent while assuming first-order smoothness of the operator $F$.
For monotone VIs,
\textit{(i)} with simple constraints,~\citet{cai2022constrVI} showed the last iterate convergence of extragradient and optimistic GDA, while
\textit{(ii)} \citet{yang2023acvi} showed the convergence for the last iterate of the \emph{ACVI} method proposed therein; the latter addressing the general constraints setting. Both results refer to the convergence of the gap function and match the lower bound.

\paragraph{Beyond monotonicity.}
Several works study the case where one of the two players is nonconvex. For instance,~\citet{nouiehed2019noncvx,lin20a,lin20b,kong2021,fiez2021global,ostrovskii2021,botAxel2023} focus on nonconvex-concave games, and~\citet{unfied2023} study convex-nonconcave problems.
~\citep{yang2020} consider problems that satisfy the two-sided Polyak-{\L}ojasiewicz inequality~\citep{polyak1963}.
\citet{diakonikolas21structured,bohm2022,pethick2022escaping} study a structured class of nonconvex-nonconcave min-max problems exhibiting so-called \emph{weak Minty} solutions~\citep{diakonikolas21structured}.

\paragraph{Beyond monotonicity: cohypomonotonicity (or $\rho$-comonotonicity).}
\citet{bauschke2021} characterize relations between nonexpansive maps with resolvents of cohypomonotone operators (referred as \textit{comonotonicity} therein); see definition in Appendix \ref{app:background}.
\citet{ATTOUCH20181095} discuss relations between cohypomonotonicity and other operator classes.
\citet{combettes2001proximal} show the weak convergence of an inexact, relaxed proximal point method~\citep{krasnoselskii1995ppm,Moreau1965,Martinet1970RegularisationDV} in the cohypomonotone setting. 
\citet{trandinh2023} studies accelerated versions of extragradient on cohypomonotone \ref{eq:vi}s with the additional assumed structure that the overall operator is a sum of two operators, with one Lipschitz continuous and the other possibly multivalued. 

\paragraph{Beyond monotonicity: hypomonotonicity.}
An operator $\F$ is hypomonotone if and only if its operator-inverse $\F^{-1}$ is cohypomonotone (see Definitions ~\ref{def:cohypomonotone} and ~\ref{def:invese-operator} in the Appendix).
\citet{iusem2003} study hypomonotonicity and equivalent relations with other classes, however, provide convergence analysis of a hybrid method of proximal point and extragradient for cohypomonotone \ref{eq:vi}s.
\citet{Alber2004} rely on the \emph{strong hypomonotonicity} structure, a subset class of hypomonotonicity, to show that a Tikhonov-Browder operator regularization is stable for data perturbations yielding operators that satisfy this assumption.
In this work, we concentrate on the relatively unexplored class of hypomonotone problems, with the goal of gaining deeper insights into its instances and investigating whether the standard extragradient method achieves convergence within this class.

\section{Preliminaries}\label{sec:preliminaries}

This section presents the method under consideration and the essential definitions needed for the main results, with further relevant definitions included in Appendix~\ref{app:background}.

\noindent\textbf{Notation.}
We denote 
\begin{enumerate*}[series = tobecont, itemjoin = \enspace, label=(\roman*),font=\itshape]
\item vectors with small bold letters,
\item real-valued functions with small letters,
\item operators with capital letters, e.g., $\F$ or $\T$, 
\item matrices with bold capital letters, and
\item sets with curly capital letters.
\end{enumerate*}
$\mathbf{A}^\dagger$ and $\mA^\ddagger$ denote the complex conjugate and conjugate transpose of the matrix $\mA$, respectively.
For a complex number $c \in \CC$ we write
$c = \mathfrak{R}(c) + i \mathfrak{I} (c)$ where $\mathfrak{R}(c)$ is the real
and $\mathfrak{I}(c)$ is the imaginary part of $c$.

\smallskip
We study the extragradient algorithm defined by the following update rule.

\noindent\textbf{Extragradient~\citep{korpelevich1976extragradient}.} At iteration $k$, to the current iterate $\vx_k$ it applies the operator at an iterate $\vy_{k+1}$ that is extrapolated using gradient descent, as follows:
\begin{equation}\tag{EG} \label{eq:eg}
\begin{aligned}
\vy_{k+1} &= \vx_k - \gamma \F(\vx_k) \\
\vx_{k+1} &= \vx_k - \gamma \F(\vy_{k+1}) 
\end{aligned} \,,
\end{equation}
where $\gamma > 0$ denotes the step size.

\medskip
Monotonicity is defined as follows.
\begin{definition}[Monotonicity]\label{def:monotone}
An operator \( \F \colon \R^n \to \R^n \) is said to be \emph{monotone} iff:
\begin{equation}\label{eq:monotone}\tag{Mnt}
\langle \F(\vx) - \F(\vy), \vx - \vy \rangle \geq 0 \,, 
\qquad \forall \vx, \vy \in \R^n.
\end{equation}
\end{definition}

\medskip
The following definition introduces a relaxed form of monotonicity where an operator’s inner product with the difference of its arguments is bounded below by a negative value.
\begin{definition}[Hypomonotonicity]\label{def:hypomonotone}
An operator \( \F \colon \R^n \to \R^n \) is \emph{hypomonotone} with modulus \( \mu \geq 0 \) iff:
\begin{equation}\label{eq:hypomonotone}\tag{HM}
\langle \F(\vx) - \F(\vy)\,, \vx - \vy \rangle \geq -\mu \norm{\vx - \vy }^2, \qquad \forall \vx, \vy \in \R^n \,.
\end{equation}
\end{definition}

\medskip
The following property ensures bounded changes in \( \F \) with respect to changes in its input, which is essential for proving convergence results in iterative algorithms.
\begin{definition}[Lipschitz operator]\label{def:lip}
An operator \( \F \colon \R^n \to \R^n \) is said to be $L$-\emph{Lipschitz continuous} with constant \( L > 0\) if:
\begin{equation}\label{eq:lip}\tag{Lip}
    \norm{ \F(\vx) - \F(\vy) } \leq L \norm{ \vx - \vy }, \qquad \forall \vx, \vy \in \R^n \,.
\end{equation}
\end{definition}

\medskip
Normal matrices play a crucial role in diagonalization and spectral analysis.
\begin{definition}[Normal Matrix] \label{def:normal}
    A matrix \( \mathbf{A} \in \CC^{n \times n} \) is \emph{normal} if it commutes with its conjugate transpose, that is,
    $
        \mathbf{A} \mathbf{A}^\dagger = \mathbf{A}^\dagger \mathbf{A} \,.
    $
\end{definition}

\medskip
For normal operators $\T$, the following spectral theorem guarantees a decomposition into orthogonal eigenvectors and eigenvalues, simplifying the analysis of $\T$.

\begin{theorem}[Spectral Theorem]\label{thm:spectral}
    Any normal matrix \( \mathbf{A} \)---as per Def.~\ref{def:normal}---can be diagonalized by a unitary matrix $\mathbf{S}$, yielding:
        $ \mathbf{A} = \mathbf{S}\mathbf{D} \mathbf{S}^\dagger \,,$
where $\mathbf{D}$ is a diagonal matrix of eigenvalues. 
\end{theorem}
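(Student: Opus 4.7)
The plan is to follow the standard energy-style analysis used to prove contraction for the extragradient method in the monotone and Lipschitz regime, and to trace where the argument fails once monotonicity is replaced by hypomonotonicity with modulus $\mu>0$. Letting $x^\star$ satisfy $F(x^\star)=0$ and setting $e_k = x_k - x^\star$, I would first expand
\[
\|e_{k+1}\|^2 = \|e_k\|^2 - 2\gamma \langle F(y_{k+1}), e_k\rangle + \gamma^2 \|F(y_{k+1})\|^2,
\]
and then use the exploratory step $y_{k+1}=x_k-\gamma F(x_k)$ to decompose $e_k = \gamma F(x_k) + (y_{k+1}-x^\star)$. This splits the cross term into a symmetric piece $\gamma\langle F(y_{k+1}), F(x_k)\rangle$ and the crucial piece $\langle F(y_{k+1}), y_{k+1}-x^\star\rangle$, which in the monotone case is nonnegative (because $F(x^\star)=0$) but under hypomonotonicity only admits the lower bound $-\mu\|y_{k+1}-x^\star\|^2$. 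This extra negative term is the structural reason the monotone proof breaks down.

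Next I would apply Lipschitz continuity in two ways: to derive $\|F(y_{k+1})\| \leq (1+L\gamma)\|F(x_k)\|$ as in Step~6 of the outline, and to replace $\|F(x_k)\|$ by $L\|e_k\|$; simultaneously I would rewrite $\|y_{k+1}-x^\star\|^2 = \|e_k-\gamma F(x_k)\|^2$ to express every term as a scalar multiple of $\|e_k\|^2$. Collecting coefficients yields a one-step recursion $\|e_{k+1}\|^2 \leq q(\gamma)\|e_k\|^2$ with
\[
q(\gamma) = 1 + 2\gamma\mu(1+2\gamma\mu) + \gamma^2 L^2\bigl(-1 + L^2\gamma^2 + 2\gamma\mu\bigr).
\]
Writing $q(\gamma) - 1 = f(\gamma) + g(\gamma)$ with $f(\gamma)=2\mu\gamma$ and $g(\gamma)=4\gamma^2\mu^2-\gamma^2 L^2+2\gamma^3\mu L^2+\gamma^4 L^4$ isolates the contribution hypomonotonicity injects ($f$) from the mixed Lipschitz/hypomonotone remainder ($g$).

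The key algebraic step is then to show that the hypothesis $|g(\gamma)|<|f(\gamma)|$ forces $q(\gamma)>1$: since $f(\gamma)=2\mu\gamma$ is strictly positive on $(0,1)$ and dominates $|g(\gamma)|$, the sum $f(\gamma)+g(\gamma)$ is strictly positive regardless of the sign of $g(\gamma)$. This rules out any contraction estimate of the form $\|e_{k+1}\|\leq c\|e_k\|$ with $c<1$ derivable from the standard energy argument, which is the content of the stated non-convergence claim.

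The main obstacle will be Step~9 of the outline: upgrading ``no contraction can be derived'' to ``the iterates do not necessarily converge.'' For this I would invoke the previous theorem on normal matrices with a negative-real-part eigenvalue to construct an explicit linear instance $F(x)=\mathcal{A}x$ realizing the hypomonotonicity modulus $\mu$. The extragradient iteration then becomes $x_{k+1}=\mathcal{M}(\gamma)\,x_k$ for the matrix polynomial $\mathcal{M}(\gamma) = I - \gamma\mathcal{A} + \gamma^2\mathcal{A}^2$, whose eigenvalues are scalar polynomials in the eigenvalues of $\mathcal{A}$. I would apply Rouché's theorem to $f(\gamma)$ and $g(\gamma)$ along a suitable circular contour in $\gamma$-space, using the dominance assumption $|g(\gamma)|<|f(\gamma)|$ to conclude that the characteristic polynomial attached to the direction of the negative-real-part eigenvalue has the same number of roots outside the unit disk as $f(\gamma)$ does; this localizes a spectral radius of $\mathcal{M}(\gamma)$ exceeding one, and therefore an eigendirection along which $\|x_k\|$ fails to decay, promoting the inequality $q(\gamma)\geq 1$ into a genuine non-convergence witness.
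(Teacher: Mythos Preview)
Your proposal does not address the stated theorem at all. The statement you were asked to prove is the \emph{Spectral Theorem}: any normal matrix $\mathcal{A}$ admits a unitary diagonalization $\mathcal{A}=S D S^\dagger$. What you have written is an energy-style analysis of the extragradient iteration under hypomonotonicity, culminating in a discussion of the contraction factor $q(\gamma)$ and a Rouch\'e-type argument about the spectrum of $\mathcal{M}(\gamma)=I-\gamma\mathcal{A}+\gamma^2\mathcal{A}^2$. That is the content of a \emph{different} theorem in the paper (the extragradient non-convergence result), not of Theorem~\ref{thm:spectral}.

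In the paper, the Spectral Theorem is listed in the appendix among the background lemmas and is not given a proof; it is invoked as a standard linear-algebra fact. If you intend to supply a proof, it must be an argument about normal matrices and unitary diagonalization (for instance, the usual Schur-decomposition route: every square matrix is unitarily similar to an upper-triangular matrix, and normality then forces the off-diagonal entries to vanish). Nothing involving $F$, $x_k$, $y_{k+1}$, step sizes $\gamma$, hypomonotonicity constants $\mu$, or Rouch\'e's theorem on $f(\gamma)$ and $g(\gamma)$ is relevant here. As written, the proposal is a complete mismatch and should be discarded for this statement.
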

\begin{lemm}[Normality of Matrix Representation]\label{lem:normal-operator}
  Let $\T \colon \H \rightarrow \H$ be a normal operator on a finite-dimensional Hilbert space $\H$. 
  Consider an orthonormal basis $\mathcal{B}=\{ \ve_1,\dots, \ve_n\}$ of $\H$. Then the matrix representation of $\T$ with respect to $\mathcal{B}$ is a normal matrix, i.e., it satisfies 
  $\mM \mM^\ddagger = \mM^\ddagger\mM$, where $\mM$ is the matrix representation of $\T$ with respect to $\mathcal{B}$ and $\mM^\ddagger$ is its conjugate transpose.
\end{lemm}

\section{Hypomonotone Operators: Examples \& Subclass Characterization}\label{sec:hm_examples}

In this section, we first give two simple examples of hypomonotone operators that are not monotone.
Following that, we characterize a parametric hypomonotone subclass that is not monotone.

\paragraph{Example 1: concave problem.}
Consider the following operator $\T_1 \colon \R^2  \to \R^2$, defined as:
$$ 
\T_1 ( \vx ) \triangleq -\mu \vx,  \qquad \text{with} \quad \mu \geq 0 \,.
$$ 
To show non-monotonicity, consider the following two points 
$ \vx=[1,0]^\intercal, \vy=[0,1]^\intercal  \in \R^2$. 
We have,  $ \langle \T_1(\vx) - \T_1(\vy),\vx - \vy \rangle = -2 \mu < 0\,.$ 

To show that $\T_1$ is hypomonotone, replacing for the LHS of \eqref{eq:hypomonotone} gives: 
$$
\langle -\mu \vx + \mu \vy, \vx - \vy \rangle =
-\mu \langle \vx - \vy, \vx - \vy \rangle \geq
-\mu \| \vx - \vy\|^2 \,.
$$ 
Since $\mu >0 $, $\T_1$ is $\mu$-hypomonotone. 

Also notice that, $\|\T_1(\vx) - \T_1(\vy)\| = |-\mu|\|  \vx -  \vy \| \leq \mu\|  \vx -  \vy \|,$ hence $\T_1$ is $\mu$-Lipschitz.

\paragraph{Example 2: affine problem.}
Consider the operator $\T_2\colon \R^2 \to \R^2 $ defined as follows:
$$
    \T_2(
    \begin{bmatrix}
    x \\
    y
    \end{bmatrix}
   ) \triangleq 
   \begin{bmatrix}
    2 x + y - 1 \\
    -x - 1.5 y + 1
    \end{bmatrix} \,.
$$ 

To show that $\T_2$ is not monotone, 
consider the vectors 
$\vz_1 = [0,0]^\intercal, \vz_2 = [0,1]^\intercal$. 
We have $\langle \T_2(\vz_1) - \T_2(\vz_2), \vz_1 - \vz_2 \rangle = -\frac{3}{2} < 0 \,$ or easier we can verify that \(T_2\)  is hypomonotone and not monotone by calculating the eigenvalues of $\T_2$ and using ~\ref{th:spe-charac-normal-operator}, in particular $\lambda=\frac{1}{4}(1-\sqrt{33})<0$.

\subsection{Hypomonotone Subclass: Operators with Negative Eigenvalues}\label{sec:}

\begin{theorem}[Normal Spectral Characterization Theorem]\label{th:spe-charac-normal-operator}
Let $\mathcal{H}$ be a finite-dimensional Hilbert space over $\R$ and let $\T\colon \H \to \H$ be a $normal$ linear operator with at least one eigenvalue $\lambda_i \in \text{Spec}(\T)$ whose real part is negative $\mathfrak{R} (\lambda_i) < 0$. Then, the operator $\T$ is hypomonotone and not monotone.
\end{theorem}

\begin{proof}{\textbf{of Theorem~\ref{th:spe-charac-normal-operator}: Normal Spectral Characterization Theorem.}}

We first show that $\T$ is hypomonotone.
 
\paragraph{Hypomonotonicity of $\T$.}
   Let $n$ be the dimension of $\H$ and let $\ve_1, \dots ,\ve_n$ be the standard basis of $\R^n$. Let $\mA$ be the matrix representation of $\T$ with respect to the standard basis. By  Lemma~\nameref{lem:normal-operator}, it follows that $\mA$ is normal. By the \nameref{thm:spectral}, there exists a unitary matrix $\mathbf{S}$ and complex numbers $\lambda_1,\dots,\lambda_n$ such that $\mathbf{A}=\mathbf{S}\,\mathbf{\operatorname{diag}}(\lambda_1, \ldots, \lambda_n)\,\mathbf{S}^{-1}$, that is $\mathbf{\operatorname{diag}}(\lambda_1, \ldots, \lambda_n)=\mathbf{S}^{-1}\,\mathbf{A}\,\mathbf{S}$.
   
    Consider a point $\vx\in \H$, with respect to the new basis representation, write \( \vx = \sum_{k=1}^{n} c_k v_k\) with $c_k \in \mathbb{C}$ and $\vv_1, \dots , \vv_n$ are eigenvectors that form the orthonormal basis. We show next that for every $ \vx, \vy \in \H$  the inequality~\eqref{eq:hypomonotone} is valid.

    \noindent
    Since $\langle \cdot,\cdot \rangle \in \R$, then
    \[
    \langle \T(\vx) - \T(\vy), \vx - \vy \rangle  = \mathfrak{R} \Big(
        \langle \T(\vx), \vx \rangle 
        - \langle \T(\vx), \vy \rangle 
        - \langle \T(\vy), \vx \rangle 
        + \langle \T(\vy), \vy \rangle \Big) \,.
  \]

Let $ \vx=\sum_{k=1}^{n} a_{k} \vv_{k}\,,$ and 
    $\vy=\sum_{l=1}^{n} b_{l} \vv_{l}$. 
    Since $\langle \vv_i, \vv_j \rangle = 0$ for $i\neq j$ it follows: 
    \[
    \langle \T(\vx), \vy \rangle =\sum_{k=1}^{n}\lambda_ka_{k}\overline{b_{k}}\,, \qquad\langle \T(\vy), \vx \rangle =\sum_{k=1}^{n}\lambda_k\overline{a_{k}}b_{k}\,,
    \] 
    \[
    \langle \T(\vx), \vx \rangle =\sum_{k=1}^{n}\lambda_k|a_{k}|^2,\qquad\langle \T(\vy),\vy \rangle =\sum_{k=1}^{n}\lambda_k|b_{k}|^2.
    \] 
    Now it follows that 
    \[ 
    \mathfrak{R} \big( \langle \T(\vx)-\T(\vy),\vx-\vy \rangle \big) = \mathfrak{R} \big( \sum_{k=1}^{n}\lambda_k (|a_k|^2+|b_k|^2 - (a_k\overline{b_k}+\overline{a_k}b_k)) \big)
    \] 
    \[
    =\mathfrak{R}\big( \sum_{k=1}^{n} (\lambda_k|a_k|^2+|b_k|^2)\big)-\mathfrak{R}\big( \sum_{k=1}^{n}(2\lambda_k\operatorname{Re}(a_kb_k)) \big)
    \] 
    \[
    \geq \mathfrak{R}\big(\sum_{k=1}^{n} (\lambda_k|a_k|^2+|b_k|^2) \big)-\mathfrak{R}\big( \sum_{k=1}^{n}(2\lambda_k|a_kb_k|) \big)
    \] 
     \[
     = \mathfrak{R}\big(\sum_{k=1}^{n} (\lambda_k(|a_k|^2+|b_k|^2-2|a_kb_k|)) \big) 
     \] 
     \[
     \geq \mathfrak{R}\big(\sum_{k=1}^{n} \lambda_k|a_k-b_k|^2\big) \geq \mathfrak{R} (\lambda_{min} ) \| \vx-\vy\|^2.
     \] 
By the assumption of the theorem, there exists an eigenvalue \(\lambda_{\min}\) with a negative real part, i.e., \(\mathfrak{R}(\lambda_{\min}) < 0\).  
Let \(\mu = -\mathfrak{R}(\lambda_{\min})\). This establishes the result in equation~\eqref{eq:hypomonotone}.

\paragraph{Non-monotonicity.}
It remains to show that $\T$ is not monotone.
By assumption, \(\T\) has at least one eigenvalue \(\lambda \in \mathbb{C}\) whose real part is negative, say
\[
  \lambda \;=\; a + i\,b
  \quad\text{with}\quad
  a = \mathfrak{R} (\lambda) < 0.
\]

Let \(\vv \in \mathbb{C}^n\) be an eigenvector corresponding to \(\lambda\).  Write
\[
  \vv \;=\; \vp \;+\; i\, \vq,
  \quad
  \text{where } \vp, \vq \in \R^n.
\]
Notice that the real span of \(\{\vp, \vq\}\subset \R^n\) is invariant under \(\T\).  Indeed, in the basis \(\{ \vp, \vq\}\) of this 2D real subspace, the matrix representation of \( \T\) is
\[
  \begin{bmatrix}
    a & -b \\
    b & a
  \end{bmatrix},
\]
which acts on any \(\, \vx = x_1\, \vp + x_2\, \vq \in \mathrm{span}\{\vp, \vq\}\subset \R^n\) as a rotation-and-scaling by \((a,b)\).

\smallskip

We now claim that there is a nonzero real vector \( \vx \) for which it holds:
\[
  \langle \T(\vx),\, \vx\rangle \;<\;0,
\]
violating monotonicity.  
Indeed, take \( \vx = x_1\, \vp + x_2\, \vq\).  A straightforward calculation in this 2D subspace shows
\[
  \langle \T(\vx),\, \vx\rangle 
  \;=\; \big\langle\, \begin{bmatrix}a & -b \\ b & a\end{bmatrix}
  \begin{bmatrix}x_1 \\ x_2\end{bmatrix},\;\begin{bmatrix}x_1 \\ x_2\end{bmatrix}\big\rangle_{\!\R^2}
  \;=\; a\,(x_1^2 + x_2^2).
\]
Since \(a<0\), we immediately get 
\[
  \langle \T(\vx),\, \vx\rangle \;<\;0
  \quad\text{for any nonzero } \vx\in \mathrm{span}\{ \vp, \vq\}.
\]
Hence for \(\vy=\mathbf{0}\), 
\[
  \langle \T(\vx)-\T(\vy),\, \vx-\vy\rangle 
  \;=\; \langle \T(\vx),\, \vx\rangle
  \;<\;0,
\]
which contradicts the monotonicity definition.  Therefore, \(T\) is \emph{not} monotone.

\end{proof}

\section{Divergence of Extragradient on Hypomonotone VIs}\label{sec:eg_divergence}

In this section, we show that the extragradient method does not converge for a general instance of the hypomonotone class of VI problems. In particular, we provide a simple counter-example.

\begin{theorem}[\ref{eq:eg} divergence on \ref{eq:hypomonotone}.]\label{th:divergence}
    Let $\gamma>0$ be the step size for the \ref{eq:eg} method.
    Let $ \vx_0 = [1,0]^\intercal$ be the initial point for the \ref{eq:eg} method, and let $\F(\vx)=\mA \vx$ to be an operator such that: 
\[
\mA = 
\begin{bmatrix} -2 & 0 \\ -1 & -2 \end{bmatrix} \,.
\]
Then, the sequence  $\|{\vx_n}\|$ generated by \ref{eq:eg} diverges as $n\rightarrow \infty.$
\end{theorem}

\begin{proof}
    By the extragradient \eqref{eq:eg} update rule, we have $\vy_{k+1}= \vx_k- \gamma \mA \vx_k$ and $\vx_{k+1}=\vx_k- \gamma \mA \vy_{k+1}$.
    Hence, $\vx_{k+1}=\vx_k(\mathbf{I}-\gamma \mA + \gamma^2\mA^2)$.

    Define $\mM (\gamma)=\mI -\gamma \mA+\gamma^2\mA^2$.  
    By the definition of $\mA$, we can see that: 
   $$
     \mM(\gamma) = \begin{bmatrix}
    1+2\gamma+4\gamma^2 & 0\\
    \gamma+4\gamma^2     & 1+2\gamma+4\gamma^2
    \end{bmatrix}.
   $$
Now consider $\vx_0$ as defined above and $\vx_1=\mathbf{M}(\gamma) \vx_0 =(1+2\gamma+4\gamma^2,\gamma+4\gamma^2)$.

Let $\|\cdot\|$ to be the euclidean norm and for all $\gamma$ such that $0<\gamma <1$: \\
$$ \|\vx_1\|^2=  (1+2\gamma+4\gamma^2)^2 + (\gamma+4\gamma^2)^2 >1\,.$$

Calculating $\vx_2$, we have:
\[
\vx_2 = \mathbf{M}(\gamma) \vx_1 = 
\begin{bmatrix}
(1+2\gamma+4\gamma^2)^2 \\ 
2(\gamma+4\gamma^2)(1+2\gamma+4\gamma^2)
\end{bmatrix},
\]
and  $\|\vx_2\| >\|\vx_1\|$. 
Furthermore iteratively 
\[
\vx_k = \mathbf{M}(\gamma)^k \vx_1 = 
\begin{bmatrix}
(1+2\gamma+4\gamma^2)^k \\ 
k (\gamma+4\gamma^2)(1+2\gamma+4\gamma^2)^{k-1}
\end{bmatrix}
\]
Thus, $\|\vx_k\| >\|\vx_{k-1}\|$; and hence $\|\vx_k\| \rightarrow \infty.$
\end{proof}

\begin{remark}
     Notice that the operator $\F$ is hypomonotone but not monotone. This follows immediately by Theorem ~\ref{th:spe-charac-normal-operator}, since we have a negative eigenvalue; in particular, $\lambda=-2$.
\end{remark}

\subsection{Numerical Experiment}

In this section, we consider the following game:
\begin{equation}\label{eq:numerical_exp}\tag{Exp-Game}
    \min_x \max_y \ \    -x^{2} + x y + y^{2} \,,
\end{equation}
with $x,y \in\R$.
The corresponding operator of \eqref{eq:numerical_exp} is:
\begin{align*}
\F( \begin{bmatrix} x\\ y\end{bmatrix}) = \begin{bmatrix}
-2x + y \\
- x - 2y
\end{bmatrix} \,.
\end{align*}
Notice that the eigenvalues of the operator above have a negative real part, in particular, $\mathfrak{R}(\lambda)=-2$. Hence, by Theorem~\ref{th:spe-charac-normal-operator} it follows that \eqref{eq:numerical_exp} is hypomonotone and not monotone.

\begin{figure}[th]
\centering
\includegraphics[width=.55\linewidth,trim={0cm .1cm .1cm .1cm},clip]{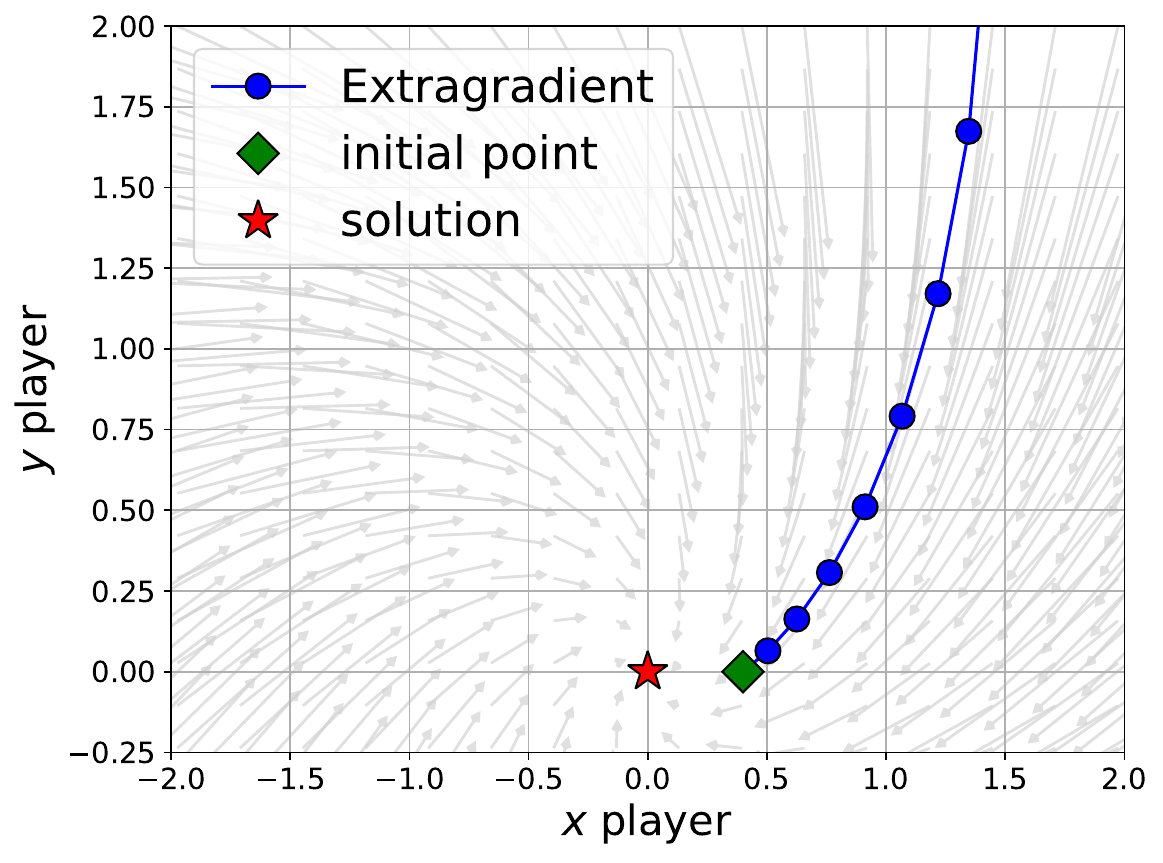}
\caption{
Trajectory of the \eqref{eq:eg} method (blue) on the \eqref{eq:numerical_exp} problem; with $\gamma=0.5$. 
The gray arrows depict the vector field of the game.
The \ref{eq:eg} method diverges for this example.
}
\label{fig:num_exp}
\end{figure}

Figure~\ref{fig:num_exp} depicts the trajectory of the \ref{eq:eg} method on the \eqref{eq:numerical_exp} problem, for a fixed step size. We observe that \ref{eq:eg}  diverges for this problem. We note that \ref{eq:eg} also diverges when we select smaller step sizes.

\section{Discussion}

In this paper, we focus on a specific subclass of hypomonotone problems by considering operators that are both normal and linear. For this restricted subclass, we provide a detailed characterization based on the eigenvalues of the associated operators.
We demonstrate the divergence of the extragradient method in the hypomonotone setting by constructing a counterexample. In this example, for a given initial point, the generated sequence diverges in norm regardless of the chosen step size. This highlights the pressing need for a deeper understanding of hypomonotone problems and the development of methods that can ensure convergence for problems that are hypomonotone but not monotone.

\bibliography{main}

\clearpage
\appendix
\section{Additional Background}\label{app:background}

This section provides additional definitions omitted from the main text.

\bigskip
For convenience, we define the graph of an operator as follows.

\begin{definition}[Operator Graph]\label{def:graph}
Given an operator $\F\colon \mathcal{X} \to \mathcal{Y}$, its \emph{graph} $\text{Graph} \F$ is:
\begin{equation}\tag{Graph}\label{eq:graph}
\text{Graph} \F \triangleq 
\{ (\vx, \vu ) | \vu \in F(\vx) , \forall \vx \in \text{dom} \F \} \subseteq \mathcal{X}  \times \mathcal{Y} \,.
\end{equation}
\end{definition}

\begin{definition}[Inverse Operator]\label{def:invese-operator}
Let $\F\colon \mathcal{X} \to \mathcal{Y}$.
The \emph{inverse} of the operator $\F$, that is $\F^{-1}$, is then:
\begin{equation}\tag{Inv}\label{eq:inv}
    \F^{-1} \triangleq \{ (\vu, \vx) | (\vx, \vu) \in \text{Graph}F\} \,.
\end{equation}
\end{definition}

The above definition also covers multi-valued operators.
Notice that the operator inverse is not a classical definition. That is, unlike for the inverse function, the operator inverse is always defined, and in the general case, it is possible $\F^{-1}\F \neq \mI$, where $\mI$ is the identity operator $\mI \triangleq \{ (\vx, \vx) | \vx \in \R^n \}$.

\bigskip
\emph{Cohypomonotonicity} is a related class to hypomonotonicity, in that it establishes a hypomonotonicity relationship but with the \emph{inverse} of the operator.
\begin{definition}[Cohypomonotonicity]\label{def:cohypomonotone}
An operator $ \F\colon \mathcal{X}\to \mathcal{Y} $ is cohypomonotone if its inverse $F^{-1}$ is hypomonotone with respect to a constant \( \rho \geq 0 \) on the set \( F(C) \), that is, 
\begin{equation} \tag{cHM} \label{eq:cohypomonotone}
    \langle \F^{-1}(\vx) - \F^{-1}(\vy), \vx - \vy \rangle \geq -\rho \| \vx - \vy \|^2  \,, \qquad \forall \vx, \vy \in \mathcal{X} \,.
\end{equation}
\end{definition}

\bigskip
The following gives the proof of Lemma~\ref{lem:normal-operator}.
\begin{proof}{of Lemma~\ref{lem:normal-operator}.}
   Let $\mathbf{M}$ be the matrix representation of $\operatorname{T}$ with respect to the standard orthonormal basis---that is, $\mM_{i,j}=\langle \operatorname{T}(e_j),e_i) \rangle$. 
   Then,
   $$
  \overline{ \langle  \operatorname{T}(e_j),e_i \rangle} = \langle e_i,\operatorname{T}(ej) \rangle = \langle \operatorname{T}^\dagger(e_j), e_i \rangle = \mathbf{M}^\dagger_{i,j} \,.
  $$
\end{proof}

\end{document}